\author{Florin Ambro} 
\address{Institute of Mathematics ``Simion Stoilow'' of the Romanian
Academy\\
P.O. BOX 1-764, RO-014700 Bucharest\\ 
Romania.}
\email{florin.ambro@imar.ro}
\newcommand{\Z}{{\mathbb Z}}
\newcommand{\R}{{\mathbb R}}
\newcommand{\bP}{{\mathbb P}} 
\newcommand{\Conv}{\operatorname{Conv}}
\newcommand{\emb}{\operatorname{emb}}
\newcommand{\Int}{\operatorname{int}}
\newcommand{\length}{\operatorname{length}}
\newcommand{\mld}{\operatorname{mld}}
\newcommand{\vol}{\operatorname{vol}}
\theoremstyle{plain}
\newtheorem{thm}{Theorem}[section]
\newtheorem{lem}[thm]{Lemma}
\newtheorem{prop}[thm]{Proposition}
\theoremstyle{definition}
\newtheorem{exmp}[thm]{Example}
\newtheorem{ack}{Acknowledgments}   
\theoremstyle{remark}
\begin{document}

\bibliographystyle{amsalpha+}
\title{An explicit bound for complements on toric varieties}
\maketitle

\begin{abstract} 
We give an effective upper bound for the index of klt complements on toric Fano varieties.
\end{abstract} 



\footnotetext[1]{2010 MSC. Primary: 14M25. Secondary: 14J45.}
\footnotetext[2]{Keywords: toric Fano varieties, complements.}


\section*{Introduction}


We are interested in the following fundamental theorem of Caucher Birkar~\cite{Birk21}:
for a Fano variety $X$ of fixed dimension, the global minimal log discrepancy $\mld X$ is bounded away 
from zero if and only if there exists a klt complement of index bounded above. 

The converse 
implication is straightforward: if $B\ge 0$, $n(K+B)\sim 0$ and $(X,B)$ is klt, then $\mld(X,B)\ge \frac{1}{n}$.
Since $B$ is effective, we obtain $\mld X\ge \mld(X,B) \ge \frac{1}{n}$. The direct implication was 
proved by C. Birkar via a very sophisticated argument: assuming $\mld X\ge \epsilon$, 
one first constructs an lc complement of bounded index, which is later upgraded to a klt
complement of bounded index $n$, using lower bounds for the $\alpha$-invariant of $-K$ and
birational boundedness of log canonical models of small volume. A natural question is 
to find effective (possibly sharp) upper bounds for $n$ in terms of $\epsilon$.
Finding sharp upper bounds may lead to interesting examples, but also to a simpler, more 
direct argument of C. Birkar's theorem.

Our main result is an effective upper bound in the case of toric Fano varieties.
Let $\epsilon\in (0,1]$. For $n\in \Z_{\ge 1}$, define recursively $\lambda(1,\epsilon)=\frac{2}{\epsilon}$
and $\lambda(n,\epsilon)=\frac{n(n+1)}{\epsilon}+\lambda(n-1,\frac{\epsilon^2}{n(n+1)})$ for $n\ge 2$.
For fixed $n$, $\lambda(n,\epsilon)$ is a polynomial with non-negative integer coefficients in $\epsilon^{-1}$,
of degree $2^{n-1}$. In particular, $\lambda(n,t)$ is decreasing in $t$. Moreover, the following inequality 
holds by induction:
$$
\lambda (n,\epsilon)\le \sum_{i=0}^{n-1} (\frac{n(n+1)}{\epsilon})^{2^i}.
$$

\begin{thm}\label{kF} 
	Let $(X,B)$ be a toric weak log Fano variety, with $B$ effective invariant. 
	Suppose $\dim X=d$ and $\mld(X,B)\ge \epsilon>0$. Then 
	$
	\sqrt[d]{(-K-B)^d} > \frac{2}{\lambda(d,\epsilon)}
	$
	and for every integer $n\ge \lambda(d,\epsilon)$, the following properties hold:
	\begin{itemize}
		\item The linear system of $\R$-Weil divisors $|-nK-nB|$ defines a birational map.
		\item Let $D\in |-nK-nB|$ be a general member. Then $n(K+B+\frac{D}{n})\sim 0$, $\mld(X,B+\frac{D}{n})\ge \frac{1}{n}$.
	\end{itemize}
\end{thm}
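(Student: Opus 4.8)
The plan is to translate the statement into the combinatorics of the fan of $X$ and the moment polytope of $-(K+B)$, to reduce both bullets to a single polytope-theoretic claim, and to prove that claim by induction on $d$ — the recursion defining $\lambda(d,\epsilon)$ being exactly the bookkeeping of this induction. So let $\Sigma$ be the fan of $X$ in $N_\R\cong\R^d$ with primitive ray generators $v_i$, write $B=\sum_i b_iD_i$ (all components invariant), and let $A\colon N_\R\to\R$ be the support function of $K+B$ (linear on each cone of $\Sigma$, with $A(v_i)=1-b_i$); it is convex and positively homogeneous, and $A(v)$ is the log discrepancy $a(E;X,B)$ of the invariant divisorial valuation attached to any primitive $v\in N$. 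Since minimal log discrepancies of toric pairs are computed by invariant valuations, $\mld(X,B)\ge\epsilon$ is equivalent to $A\ge\epsilon$ on $N\setminus\{0\}$. As $-(K+B)$ is nef and big, its moment polytope $P=\{m:\langle m,v_i\rangle\ge b_i-1\ \forall i\}=\{m:\langle m,v\rangle\ge -A(v)\ \forall v\}$ is full-dimensional, contains $0$ in its interior (because $b_i\le 1-\epsilon<1$), and satisfies $\vol(-K-B)=d!\,\vol_M(P)$; note also that the width of $P$ in any primitive lattice direction $v$ equals $A(v)+A(-v)\ge 2\epsilon$.

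I claim both bullets follow from the assertion
\[
(\star)\qquad \lambda(d,\epsilon)\cdot P\ \text{contains a unimodular }d\text{-simplex }U=\Conv(w_0,\dots,w_d).
\]
Granting $(\star)$: for an integer $n\ge\lambda(d,\epsilon)$ one has $nP\supseteq\lambda(d,\epsilon)\cdot P\supseteq U$ (as $0\in P$), so the monomials $\chi^{w_0},\dots,\chi^{w_d}$ span a linear subsystem of $|-nK-nB|$ which, restricted to the torus, realises $t\mapsto\chi^{w_0}(t)\cdot[1:\chi^{w_1-w_0}(t):\cdots:\chi^{w_d-w_0}(t)]$, an injection since $w_1-w_0,\dots,w_d-w_0$ is a $\Z$-basis of $M$; hence $|-nK-nB|$ defines a birational map. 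Likewise $P\supseteq\lambda(d,\epsilon)^{-1}U$ gives $\vol_M(P)\ge\lambda(d,\epsilon)^{-d}/d!$, i.e. $\sqrt[d]{\vol(-K-B)}\ge\lambda(d,\epsilon)^{-1}$. For the last bullet, $D\in|-nK-nB|$ means $n(K+B+\tfrac{D}{n})\sim 0$ by definition; since $-(K+B)$ is nef it is base-point-free on the toric variety $X$, so a general $D$ keeps $(X,B+\tfrac{D}{n})$ klt by Bertini, and then — as in the introduction — $n(K+B+\tfrac{D}{n})\sim 0$ together with kltness put every log discrepancy in $\tfrac1n\Z_{>0}$, whence $\mld(X,B+\tfrac{D}{n})\ge\tfrac1n$.

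It remains to prove $(\star)$, by induction on $d$; it is cleaner to do this after dropping the effectivity of $B$ — i.e. for any complete fan $\Sigma$ and convex positively-homogeneous $A$ with $A\ge\epsilon$ on $N\setminus\{0\}$ and $P$ full-dimensional — since that is the class of data produced by lowering the dimension. For $d=1$, $X=\bP^1$ and $P$ is an interval of length $\ge 2\epsilon$, so $\lambda(1,\epsilon)P=\tfrac2\epsilon P$ has length $\ge 4$ and contains a lattice segment $[k,k+1]$. For the step $d-1\rightsquigarrow d$ I would carry out a dimension reduction: choose a primitive $v_0\in N$ and an integer level $j$, and slice $S:=\bigl(\lambda(d,\epsilon)P\bigr)\cap\{\langle\cdot,v_0\rangle=j\}$. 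Recentred at a lattice point of the affine sublattice $M\cap\{\langle\cdot,v_0\rangle=j\}$, $S$ should be exhibited as a large dilate of the moment polytope of a $(d-1)$-dimensional weak log Fano datum whose minimal log discrepancy is at least $\epsilon':=\epsilon^2/(d(d+1))$ — the drop $\epsilon\mapsto\epsilon'$ is the price of restricting to a \emph{lattice} hyperplane (which only detects lattice directions inside it), and controlling it is what forces both the squaring of $\epsilon$ and the size $O(1/\epsilon)$ of the useful $v_0$. The inductive hypothesis then supplies a unimodular $(d-1)$-simplex $\Conv(w_0,\dots,w_{d-1})\subseteq S$ as soon as the dilation factor of $S$ exceeds $\lambda(d-1,\epsilon')$ — the summand $\lambda(d-1,\tfrac{\epsilon^2}{d(d+1)})$ of the recursion. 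Finally, since the width of $\lambda(d,\epsilon)P$ in the direction $v_0$ is at least $2\epsilon\,\lambda(d,\epsilon)\ge 2$, the slice at level $j\pm 1$ is nonempty and large enough to contain a lattice point $w_d$; as $\langle\cdot,v_0\rangle$ identifies $M/(v_0^\perp\cap M)$ with $\Z$, the simplex $\Conv(w_0,\dots,w_d)$ is unimodular and lies in $\lambda(d,\epsilon)P$. The extra summand $\tfrac{d(d+1)}{\epsilon}$ is exactly the slack one must budget for the choices of $v_0$ and $j$, for positioning the $(d-1)$-simplex inside $S$, and for the lift.

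The hard part is this inductive step. The delicate points are: arranging the slice so that $S$ is genuinely (a dilate of) the moment polytope of a lower-dimensional weak log Fano datum whose minimal log discrepancy is bounded below by a quantity of the shape $\epsilon^2/(d(d+1))$ — rather than by something uncontrolled — and then executing the one-dimensional lift inside $\lambda(d,\epsilon)P$. This unavoidable loss $\epsilon\mapsto\epsilon^2/(d(d+1))$ per slicing step is precisely what makes $\lambda(d,\epsilon)$, in this approach, as large as $\bigl(d(d+1)/\epsilon\bigr)^{2^{d-1}}$.
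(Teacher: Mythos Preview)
Your strategy --- reduce both bullets to finding a unimodular simplex inside $\lambda(d,\epsilon)P$, then induct on $d$ by slicing --- matches the paper's in outline, and you are candid that the inductive step is where the work lies. But as you have set it up, that step has a genuine gap, and the missing ingredient is not a detail. You slice $P\subset M_\R$ by $\{\langle\cdot,v_0\rangle=j\}$ for some primitive $v_0\in N$ and assert the recentred slice is a dilate of a $(d{-}1)$-dimensional moment polytope with mld $\ge\epsilon'=\epsilon^2/(d(d+1))$. Slicing $P$ through the origin is polar-dual to \emph{projecting} $U=P^*$ onto $N_\R/\R v_0$, and a projection of $U$ can acquire arbitrarily many interior lattice points even when $\epsilon U$ has none; so the mld of your slice is not controlled by $\epsilon$ alone, and nothing you wrote singles out a $v_0$ for which it would be. You also never explain where the constant $d(d+1)$ comes from, and your lift (``the slice at $j\pm1$ \dots\ contains a lattice point $w_d$'') is unjustified: a nonempty convex slice need not contain a lattice point.

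The paper runs the induction on the dual side, and this choice is what makes it go through. One slices $U\subset N_\R$ by $\varphi^\perp$ for a primitive $\varphi\in M$; then $U_0=U\cap\varphi^\perp$ automatically satisfies $N_0\cap\operatorname{int}(\epsilon U_0)=\{0\}$, so the mld bound is preserved for free. The cost is that the induced boundary on the general fibre $F$ need not be effective; one repairs this by interpolating with the toric boundary, replacing $B_F$ by $\tilde B_F=(1-t)\Sigma_F+tB_F$ with $t=1/w$, $w=\operatorname{length}\varphi(U)$, at the price of scaling the mld to $\epsilon/w$. The constant $d(d+1)$ then enters through a flatness theorem you never invoke: from $N\cap\operatorname{int}(\epsilon U)=\{0\}$, \cite[Theorem~5.2.b)]{AI20} supplies a primitive $\varphi$ with $w\le d(d+1)/\epsilon$, so the new mld is at least $\epsilon^2/(d(d+1))$. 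The lift from $U_0$ to $U$ is done by a separate extension lemma (Proposition~\ref{lfb}) with an explicit length estimate, yielding $\lambda(X,B)<w+\lambda(F,\tilde B_F)$ and closing the recursion. A smaller point: your Bertini reduction for the second bullet is shaky because $|-nK-nB|$ need not be base-point-free even though $-(K+B)$ is $\R$-free; the paper instead computes the toric log discrepancies directly from the basis, $n\cdot a_{E_e}(X,B+\tfrac{D}{n})=\max_i|\langle\varphi_i,e\rangle|\ge 1$.
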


The proof of Theorem~\ref{kF} is combinatorial, with the following geometric interpretation.
If $\Sigma_X$ is the complement of the open torus inside $X$, then $K+\Sigma_X\sim 0$
is a log canonical complement of index one. We construct a rational map $\phi \colon X\dashrightarrow \bP^1$
induced by a pencil $\Lambda_1 \subset |-n_1K-n_1B|$, with $n_1$ bounded, such that
for a general member $D_1\in \Lambda_1$, $K+B+\frac{1}{n_1}D_1\sim_{n_1}0$ 
has lc singularities and all its lc centers dominate $\bP^1$ via $\phi$.
Let $\mu\colon X'\to X$ be the normalization of the graph of $\phi$, let $\mu^*(K+B)=K_{X'}+B_{X'}$ be
the log pullback. Let $F$ be the general fiber of the fibration $\phi'\colon X'\to \bP^1$, let
$(K_{X'}+B_{X'})|_F=K_F+B_F$. Then $\dim F=\dim X-1$, and the claim lifts by induction
from $F$ to $X$. This is true provided $B_F$ is effective. If not, 
the boundedness of $n_1$ allows to make $B_F$ effective after replacing it with a convex combination with the toric 
lc complement $\Sigma_F\in |-K_F|$, at the expense of scaling down $\mld(F,B_F)$ by a bounded factor.

\begin{ack} 
I would like to thank Caucher Birkar, Yu Zou and the anonymous referee for comments and corrections.
\end{ack}


\section{Preliminary}


We use standard notation on toric varieties and convex sets, cf.~\cite{Oda88}.


\subsection{Toric weak log Fano varieties}


Let $(X^d,B)$ be a proper toric variety $X$ endowed with invariant boundary $B$ 
such that $-K-B$ is $\R$-free and $\mld(X,B)\ge \epsilon>0$. Since $X$ is proper,
it follows that $-K-B$ is big. 

Let $X=T_N\emb(\Delta)$. Let 
$M=N^*$ be the dual lattice. Let $\square_{-K-B}\subset M_\R$ be the moment
polytope of $-K-B$, let $U=\square^*\subset N_\R$ be the polar dual convex body.
We have $U=\Conv(\frac{e_i}{a_i}|i)$, where $\Delta(1)=\{e_i\}$ and 
$B=\sum_i(1-a_i)V(e_i)$. Since $-K-B$ is $\R$-free, we have
$$
\mld(X,B)=\max\{t>0; N\cap \Int(tU)=\{0\} \}.
$$

Define $\lambda(X,B)$ to be the minimum after all $t>0$ such that 
$M\cap t\cdot (U-U)^*$ contains a basis of $M$. Equivalently,
$\lambda$ is the minimum after all $t>0$ such that $M$ admits a basis
$\varphi_1,\ldots,\varphi_d$ such that the interval $\varphi_i(U)$
has length at most $t$, for all $1\le i\le d$.

\begin{lem}\label{lp} Denote $\lambda=\lambda(X,B)$. The following properties hold:
	\begin{itemize}
		\item[a)] $\sqrt[d]{(-K-B)^d} > \frac{2}{\lambda}$.
		\item[b)] Let $n\ge \lambda$ be an integer. Then the linear system of $\R$-Weil divisors
		$|-nK-nB|$ defines a birational map, and the general member $D\in |-nK-nB|$ satisfies
		the following properties: $n(K+B+\frac{D}{n})\sim 0$ and $\mld(X,B+\frac{D}{n})\ge \frac{1}{n}$.
		\item[c)] If $\dim X$ is fixed and $\lambda(X,B)$ is bounded above, then $X$ belongs to
		finitely many isomorphism types.
	\end{itemize}
\end{lem}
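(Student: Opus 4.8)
The plan is to translate the statement into the combinatorics of the bodies $U$ and $\square=\square_{-K-B}$, and to prove the three assertions of (a) one at a time, then (b).

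\emph{Birationality and $n(K+B+\tfrac Dn)\sim0$.} On a toric variety the linear system $|-nK-nB|$ is spanned by the characters $\chi^m$, $m\in M\cap n\square$, so the associated rational map is birational onto its image iff these characters separate a general pair of torus points, i.e.\ iff the differences $m-m'$ ($m,m'\in M\cap n\square$) generate $M$. If $n\ge\lambda(X,B)$ then by definition $M\cap n(U-U)^*$ contains a basis $\varphi_1,\dots,\varphi_d$ of $M$; since $0\in\Int U$ one has $\min_U\varphi\le0$, hence $\max_U\varphi\le\max_U\varphi-\min_U\varphi$ for every $\varphi$, so $n(U-U)^*\subseteq nU^*=n\square$. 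Thus $\varphi_1,\dots,\varphi_d$ and $0$ all lie in $M\cap n\square$ and the first claim follows. For the second, a general $D\in|-nK-nB|$ is $D=-nK-nB+\dv(f)$ with $f=\sum_{m\in M\cap n\square}c_m\chi^m$ and $(c_m)$ general, so $nK+nB+D=\dv(f)\sim0$ (genuine linear equivalence, $\dv(f)$ being a principal integral divisor).

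\emph{The minimal log discrepancy.} First I would record the a priori bound $\lambda(X,B)\ge2$: for a nonzero $\varphi\in M$, since the $e_i$ positively span $N_\R$ there are $i,j$ with $\langle\varphi,e_i\rangle\ge1$ and $\langle\varphi,e_j\rangle\le-1$, and as $e_i/a_i,e_j/a_j\in U$ with $0<a_i,a_j\le1$ the interval $\varphi(U)$ then has length $\ge2$; hence $n\ge2$. Now fix a smooth toric resolution $\mu\colon Y\to X$ refining $\Delta$, chosen fine enough that $v\mapsto\min_{m\in M\cap n\square}\langle m,v\rangle$ is linear on the cones of $Y$. For a general $D$ one can write $\mu^*D=\widetilde D+G$ with $G$ effective and torus-invariant and $\widetilde D$ a general member of the base-point-free linear system on $Y$ with polytope $\Conv(M\cap n\square)$; by Bertini $\widetilde D$ is smooth and meets the toric boundary of $Y$ transversally, so a routine computation of the log pullback gives $\mu^*(K+B+\tfrac Dn)=K_Y+\Gamma_Y$ with $\Gamma_Y$ \emph{log smooth}: its prime components are the $V(v)$ ($v$ a ray of $Y$) with coefficient $1+\tfrac1n\min_{m\in M\cap n\square}\langle m,v\rangle$, together with $\widetilde D$ with coefficient $\tfrac1n$.

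\emph{Stratum check and part (b).} Since $\mu$ is crepant for the pair, $\mld(X,B+\tfrac Dn)=\mld(Y,\Gamma_Y)$, and for a log smooth pair this equals the minimum, over nonempty strata, of $\codim-(\text{sum of the coefficients of the components through the stratum})$. On a purely toric stratum $\bigcap_sV(v_s)$ (the $v_s$ forming part of a $\Z$-basis of $N$) this number is $-\tfrac1n\sum_s\min_{m\in M\cap n\square}\langle m,v_s\rangle$, and each summand is $\le-1$: indeed $M\cap n(U-U)^*\subseteq M\cap n\square$ contains a basis and is symmetric, so every nonzero $v\in N$ pairs strictly negatively with some $m\in M\cap n\square$. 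On a stratum also meeting $\widetilde D$ one gets $1-\tfrac1n-\tfrac1n\sum_s\min_m\langle m,v_s\rangle\ge1-\tfrac1n\ge\tfrac1n$, using $n\ge2$ and that each $\min_m\langle m,v_s\rangle\le0$. Both cases give $\ge\tfrac1n$, proving (a). For (b), if $\lambda(X,B)\le C$ then, after an automorphism of $N$ (which does not change the isomorphism type of $X$), the basis in $M\cap C(U-U)^*$ is standard, forcing $U\subseteq[-C,C]^d$; since $e_i/a_i\in U$ and $0<a_i\le1$, each $e_i$ lies in the finite set $N\cap[-C,C]^d$, so $\Delta(1)$, hence the complete fan $\Delta$, hence $X$, has only finitely many possibilities.

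The main obstacle is the mld step. The moving part of $|-nK-nB|$ need not be base-point free on $X$ itself, so one is forced onto a toric resolution in order to make it free and the pulled-back boundary log smooth before Bertini and the coefficient bookkeeping become legitimate; once that is set up the numerical heart of the estimate is merely the bound $\lambda\ge2$ together with the implication that a symmetric basis inside $M\cap n\square$ makes $\min_{m\in M\cap n\square}\langle m,v\rangle\le-1$ for every $v\in N\setminus\{0\}$.
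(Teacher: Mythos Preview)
Your proof is correct and follows the same combinatorial strategy as the paper: the basis $\varphi_1,\dots,\varphi_d\in M\cap n(U-U)^*\subseteq M\cap n\square$ yields birationality, and for (b) it places $\Delta(1)$ in a bounded box after a lattice automorphism. The difference lies in the mld step. The paper takes $D$ general in the \emph{sub}-linear system spanned by $\chi^{\pm\varphi_1},\dots,\chi^{\pm\varphi_d}$ and writes the toric log discrepancy in one line,
\[
n\cdot a_{E_e}\bigl(X,B+\tfrac{D}{n}\bigr)=-\min_{a\in\{\pm\varphi_i\}}\langle a,e\rangle=\max_i|\langle\varphi_i,e\rangle|\ge 1,
\]
leaving the contribution of the strict transform of $D$ (and hence the non-toric valuations) implicit. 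You instead take $D$ general in the full system $|-nK-nB|$, pass to a toric log resolution, invoke Bertini, and check every stratum of the resulting log smooth pair; the numerical core is the same (the symmetric set $\{\pm\varphi_i\}\subset M\cap n\square$ forces $\min_{m\in M\cap n\square}\langle m,v\rangle\le-1$ for every nonzero $v$), but your treatment makes explicit why non-toric valuations cannot drop the mld below $\tfrac1n$, at the cost of the auxiliary bound $\lambda(X,B)\ge 2$. One small slip: from $0\in U$ the relevant inequality is $\max_U\varphi\ge 0$, giving $-\min_U\varphi\le\max_U\varphi-\min_U\varphi$ and hence $\varphi\in U^*=\square$; the inequality you wrote yields $\varphi\in(-U)^*$ instead, but the conclusion $(U-U)^*\subseteq\square$ is unaffected since $(U-U)^*$ is symmetric.
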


\begin{proof}
	There exists a basis $\varphi_1,\ldots,\varphi_d$ of $M$ such that $\max_i\length \varphi_i(U) \le \lambda$. 
	In particular, $\varphi_i(U) \subset  (-\lambda,\lambda)$ for all $i$. That is, $\pm \varphi_i\in \Int(\lambda\square)$ for all $i$. 
	Let $Q\subset M_\R$ be the convex hull of $\pm \varphi_1,\ldots,\pm \varphi_d$. 
	
	a) We have $\vol_M(Q)=\frac{2^d}{d!}$ and $Q\subset \Int(\lambda \square)$. Since $-K-B$ is nef and big, we obtain
	$$
	(-K-B)^d=d!\vol_M(\square)>\frac{d!\vol_M(Q)}{\lambda^d}=\frac{2^d}{\lambda^d}.
	$$
	
	b) Since $n\ge \lambda$, $\Conv(M\cap \square_{-nK-nB})$
	contains $Q$, which is a $0$-symmetric lattice convex body. Therefore
	$|-nK-nB|$ defines a birational map. We have $n(K+B+\frac{D}{n})\sim 0$. 
	For a toric valuation $E_e\ (e\in N^{prim})$ of $X$, we compute the log discrepancy
	$$
	 n\cdot a_{E_e}(X,B+\frac{D}{n})=-\min_{a\in \{\pm \varphi_1,\ldots,\pm \varphi_d\} }  \langle a,e\rangle  =
	\max_{i=1}^d|\langle \varphi_i,e\rangle|\ge 1.
	$$
	Therefore $\mld(X,B+\frac{D}{n})\ge \frac{1}{n}$.
	
	c) Note that $\Phi=(\varphi_1,\ldots,\varphi_d)\colon N\to \Z^d$ is an isomorphism of lattices, and $\Phi(U)\subset  (-\lambda,\lambda)^d$. Since $\max_{e_i \in \Delta(1)}a_i\le 1$,
	we have $\Delta(1)\subset U$. Therefore $\Phi(\Delta(1))\subset (-\lambda ,\lambda)^d$.
	If $\lambda$ is bounded above, then $\Phi(\Delta(1))$ belongs to a finite collection, hence 
	$X=T_N\emb(\Delta)$ is finite up to isomorphism.
\end{proof}


\subsection{Extending linear maps with length estimates}


	Let $\varphi\colon \Lambda\to \Z$ be a surjective homomorphism, where $\Lambda$ is a lattice of dimension at least $2$. 
	Let $\Lambda_0=\Lambda\cap \varphi^\perp$. Let $C\subset \Lambda_\R$ be a closed convex set such that $\varphi(C)=[p_1,p_2]$, with $p_1<p_2$ real numbers. Let $p=\lambda_1p_1+\lambda_2p_2$, where $\lambda_1,\lambda_2>0$ and $\lambda_1+\lambda_2=1$. Denote $\gamma=\min(\lambda_1,\lambda_2)$ and $w(\varphi)=p_2-p_1$.
	
\begin{prop}\label{lfb}
	Let $\varphi_0\colon \Lambda_0\to \Z$ be a surjective homomorphism.
	Suppose the $\varphi_0$-image of a translation of $C \cap\varphi^{-1}(p)$ which is contained in $\varphi^\perp$
	has finite positive length $w_0$. Then there exists
	a surjective homomorphism $\varphi'\colon \Lambda\to \Z$ such that $\varphi'|_{\Lambda_0}=\varphi_0$
	and 
	$
	\length \varphi'(C)<w(\varphi)+\frac{w_0}{\gamma}.
	$
\end{prop}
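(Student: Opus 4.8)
\medskip
\noindent\emph{Proof idea.} The plan is to reduce the statement to a planar convex-geometry problem carrying a single free integer parameter, and then to choose that parameter by a subgradient estimate at the point $p$. First I would pass to the rank-$2$ quotient lattice $\Lambda/\ker\varphi_0$: since $\ker\varphi_0\subseteq\ker\varphi$ and every homomorphism extending $\varphi_0$ automatically kills $\ker\varphi_0$, the statement is unchanged, as are the numbers $w(\varphi),w_0,\gamma$, so I may assume $\dim\Lambda=2$ and $\Lambda_0=\ker\varphi$. Choosing $f_0\in\Lambda_0$ and $f_1\in\Lambda$ with $\varphi_0(f_0)=\varphi(f_1)=1$ gives coordinates $(x_0,x_1)$ in which $\varphi=x_1$, $\varphi_0=x_0$ on $\Lambda_0$, and the extensions of $\varphi_0$ are precisely the maps $\varphi'_m=x_0+mx_1$ with $m\in\Z$, all surjective. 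Translating $C$ in the $x_0$-direction, I may assume the slice of $C$ over $p$ begins at $x_0=0$; writing $\alpha(t)=\inf\{x_0:(x_0,t)\in C\}$ (a convex function) and $\beta(t)=\sup\{x_0:(x_0,t)\in C\}$ (concave) for $t\in\varphi(C)=[p_1,p_2]$, this says $\alpha(p)=0$ and $\beta(p)=w_0$. Here $w_0<\infty$ together with convexity of $C$ makes $\alpha,\beta$ finite-valued on $[p_1,p_2]$: an escape to $x_0=\pm\infty$ over any level, joined to a point of $C$ on the opposite side of $p$, would make the slice over $p$ unbounded.

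Next I would recast the goal. Since $\varphi'_m(C)=\bigcup_{t\in[p_1,p_2]}[\alpha(t)+mt,\ \beta(t)+mt]$ is an interval,
\[
\length\varphi'_m(C)=\max_t\big(\beta(t)+mt\big)-\min_t\big(\alpha(t)+mt\big)=w_0+E_\beta(m)+E_\alpha(m),
\]
where $E_\beta(m)=\max_t(\beta(t)+mt)-(\beta(p)+mp)\ge 0$ records how far the maximum of $\beta+mt$ exceeds its value at $p$, and $E_\alpha(m)=(\alpha(p)+mp)-\min_t(\alpha(t)+mt)\ge 0$ symmetrically; so it suffices to find $m\in\Z$ with $E_\beta(m)+E_\alpha(m)<w(\varphi)+w_0(\frac1\gamma-1)$. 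Writing $\partial\beta(p)=[b^-,b^+]$ and $\partial\alpha(p)=[a^-,a^+]$ for the (compact, nonempty) intervals of slopes of supporting lines of $\beta$, resp.\ $\alpha$, at the interior point $p$, the supporting-line inequalities $\beta(t)-\beta(p)\le s(t-p)$, $\alpha(t)-\alpha(p)\ge s'(t-p)$, together with $p-p_1=\lambda_2 w(\varphi)$ and $p_2-p=\lambda_1 w(\varphi)$, yield, after optimizing over $s\in\partial\beta(p)$, $s'\in\partial\alpha(p)$,
\[
E_\beta(m)\le\lambda_1 w(\varphi)(b^-+m)_++\lambda_2 w(\varphi)(-b^+-m)_+,\qquad E_\alpha(m)\le\lambda_2 w(\varphi)(a^-+m)_++\lambda_1 w(\varphi)(-a^+-m)_+.
\]
In particular $E_\beta(m)$, resp.\ $E_\alpha(m)$, vanishes exactly when $-m$ lies in $[b^-,b^+]$, resp.\ $[a^-,a^+]$.

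I would then choose $m$ according to the relative position of $[b^-,b^+]$ and $[a^-,a^+]$. If they meet and the intersection contains an integer, take $-m$ equal to it: both errors vanish and $\length\varphi'_m(C)=w_0<w(\varphi)+w_0/\gamma$. If they meet but the intersection (necessarily of length $<1$) lies in an open unit interval $(k,k+1)$, take $-m=k$; the estimates then give $E_\beta(m)<\lambda_1 w(\varphi)$ and $E_\alpha(m)<\lambda_2 w(\varphi)$, so $E_\beta(m)+E_\alpha(m)<w(\varphi)$. The substantive case is that $[b^-,b^+]$ and $[a^-,a^+]$ are disjoint, say $b^+<a^-$ with gap $g=a^--b^+>0$ (the case $a^+<b^-$ being symmetric under $p_1\leftrightarrow p_2$, $\lambda_1\leftrightarrow\lambda_2$): now no integer shift can kill both errors, but the gap is self-limiting. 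Comparing the supporting lines of $\beta$ and $\alpha$ at $p$ on the side $t>p$ gives $\beta(t)-\alpha(t)\le w_0+(b^--a^+)(t-p)\le w_0-g(t-p)$, which at $t=p_2$, since $\beta-\alpha\ge 0$, forces $g\,\lambda_1 w(\varphi)\le w_0$. Taking $-m$ to be an integer in $[b^+,a^-]$ (or, when $g<1$ and there is none, the integer just below that interval), the estimates give
\[
E_\beta(m)+E_\alpha(m)\le \lambda_2\,g\,w(\varphi)\le\frac{\lambda_2}{\lambda_1}w_0\le w_0\Big(\frac1\gamma-1\Big)
\]
(or $E_\beta(m)+E_\alpha(m)<w(\varphi)$ when $g<1$), using $\frac{\lambda_2}{\lambda_1}\le\frac{1-\gamma}{\gamma}$; in every case $\length\varphi'_m(C)<w(\varphi)+w_0/\gamma$.

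The step I expect to be the main obstacle is precisely this disjoint-subdifferentials case: one must notice that a gap between $\partial\beta(p)$ and $\partial\alpha(p)$ can only open up when $C$ is thin on the corresponding side of the slice over $p$, and then keep careful track of which of $\lambda_1,\lambda_2$ weights each error term, so that the bound lands inside $w(\varphi)+w_0/\gamma$ rather than $w(\varphi)+2w_0/\gamma$, which the crude estimates alone would give. The reduction to the plane and the overlapping cases are routine by comparison.
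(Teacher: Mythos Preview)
Your proof is correct and rests on the same ingredients as the paper's: reduce to a planar picture, bound $C$ via supporting lines through the two endpoints of the slice over $p$, and optimize the extension by an integer shear $\varphi'\mapsto\varphi'+z\varphi$. The organization differs, however. The paper picks one extension $\varphi'$, encloses $\Phi(C)\subset\R^2$ in the quadrilateral $Q$ cut out by two supporting lines and the verticals $x_1=p_1,p_2$, and splits into two cases according to whether the $x_2$-extrema of $Q$ lie on the same vertical side (handled by a triangle-similarity estimate giving $\length\le w_0/\gamma$) or on opposite sides (handled by a single shear bringing a vertex gap $\delta$ below $p_2-p$). You instead parametrize all extensions as $\varphi'_m$, write $\length\varphi'_m(C)=w_0+E_\alpha(m)+E_\beta(m)$, and bound the errors through the full subdifferential intervals $\partial\alpha(p)$, $\partial\beta(p)$, with a three-way split on their relative position; your key observation in the disjoint case, that the gap $g=a^--b^+$ is self-limited by $g\,\lambda_1 w(\varphi)\le w_0$, is the analytic counterpart of the paper's similarity argument. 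Your framework is a bit more systematic and makes transparent why the bound is $w(\varphi)+w_0/\gamma$ rather than something weaker, while the paper's synthetic argument is shorter and more visual; neither approach dominates the other.
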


\begin{proof}
	Let $\varphi'\colon \Lambda\to \Z$ be an arbitrary extension of $\varphi_0$. Then 
	$\Phi=(\varphi,\varphi')\colon \Lambda\to \Z^2$ is surjective, $\Phi(C)$ is
	a convex body in $\R^2$ such that its projection onto the $x_1$-axis is the interval $[p_1,p_2]$, and its intersection with the line $\{x_1=p\}$ is an interval of length $w_0$. Through each endpoint of the interval $\Phi(C)\cap \{x_1=p\}$, draw a supporting hyperplane to $\Phi(C)$. Together with the inequalities $p_1 \le x_1 
	\le p_2$, these form a quadrilateral or triangle $Q$, which contains $\Phi(C)$. We minimize and maximize the $x_2$-coordinate among the vertices of $Q$. There are two cases.
	
	1) Case both min and max are attained on the same vertical line defining $Q$. Say $(p_2,y)$ and $(p_2,y')$ are such that $[y,y']$ is the projection of $Q$ onto the $x_2$-axis. Then $y'-y=\length  \varphi'(Q)=:d$. Keeping the same intersection $Q\cap \{x_1=p\}$, we slide one of the vertices of $Q\cap\{x_1=p_1\}$ towards the other until $Q$ becomes a triangle $Q'$ with a vertex on the line $x_1=p_1$ and base a segment on the line $x_1=p_2$ of length $d'\ge d$. By similarity, we compute
	$$
	\frac{d'}{w_0}=\frac{p_2-p_1}{p-p_1}=\frac{1}{\lambda_2}.
	$$
	Therefore $d\le d'=\frac{w_0}{\lambda_2}\le \frac{w_0}{\gamma}$. So in this case, $\varphi'$ is the desired extension, satisfying the stronger inequality 
	$
	\length \varphi'(C) \le \frac{w_0}{\gamma}.
	$
	
	2) Case min and max are attained on different vertical lines defining $Q$. Say $(p_1,y)$ and $(p_2,y')$ are vertices of $Q$ such that $[y,y']$ is the projection of $Q$ onto the $x_2$-axis. Let $(p,y'')$ be the vertex of $Q\cap \{x_1=p\}$ with maximal $x_2$-value.	
	Let $\delta=y'-y''$. Keeping the vertex $(p_2,y')$ of $Q$ fixed, we rotate the non-vertical supporting hyperplane of $Q$ until it becomes $\{x_2=y'\}$. The new body $Q'$ has again $[y,y']$ as projection onto the $x_2$-axis, $(p_1,y),(p_1,y')$ are among its vertices, and its intersection with $\{x_1=p\}$ has length $w_0+\delta$. By the argument of case 1), we have 
	$$
	\frac{\length \varphi'(C)}{w_0+\delta}\le \frac{1}{\lambda_1}.
	$$ 	
	For $z\in \Z$, we have an automorphism of $\Z^2$ defined by $x'_1=x_1,x'_2=zx_1+x_2$. It acts on each line $\{x_1=a\}$ as the translation $y\mapsto y+za$.
	After this automorphism, $\delta$ transforms into $\delta+z(p_2-p)$.
	There exists a unique $z\in \Z$ such that $\delta+z(p_2-p)\in [0,p_2-p)$, namely 
	$z=\lceil \frac{-\delta}{p_2-p}\rceil$. Therefore, after changing the second coordinate
	(i.e. changing the lifting of $\varphi_0$ from $\varphi'$ to $\varphi'+z\varphi$), we may suppose $\delta<p_2-p=\lambda_1(p_2-p_1)$. Then
	$
	\length \varphi'(C) <\frac{w_0}{\lambda_1}+p_2-p_1\le \frac{w_0}{\gamma}+w(\varphi).
	$
\end{proof}


\section{A construction}


Let $(X^d,B)$ be a toric weak log Fano with $\mld(X,B)\ge \epsilon>0$. 
The latter assumption is equivalent to 
$$
N\cap \Int(\epsilon U)=\{0\}.
$$

Let $\varphi$ be a primitive element of $M$, that is $\varphi\colon N\to \Z$ is a surjective
linear homomorphism. Let $w$ be the length of the interval $\varphi(U) \subset \R$.

\begin{lem}\label{1w}
$\varphi(U)=[-w_-,w_+]$, where $w_-,w_+\ge 1$ and $w_-+w_+=w$.	
\end{lem}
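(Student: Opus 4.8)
The plan is to unwind the definitions of $U$ and of $w$ and reduce the whole statement to an elementary integrality argument on the integers $\langle\varphi,e_i\rangle$, $e_i\in\Delta(1)$. Throughout I use that $X$ is proper, so $\Delta$ is complete and its ray generators $\Delta(1)=\{e_i\}$ positively span $N_\R$, and that every $a_i>0$ (indeed $a_i=a_{E_{e_i}}(X,B)\ge\mld(X,B)\ge\epsilon$), so that $U=\Conv(e_i/a_i\mid i)$ is a genuine convex body and $\varphi(U)$ is a bounded closed interval.

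First I would pin down the shape of $\varphi(U)$. Since $\varphi$ is linear and $U$ is the convex hull of the finite set $\{e_i/a_i\}$, we have $\varphi(U)=[\min_i\langle\varphi,e_i\rangle/a_i,\ \max_i\langle\varphi,e_i\rangle/a_i]$. Because $\varphi\neq 0$ and the $e_i$ positively span $N_\R$, some $\langle\varphi,e_j\rangle>0$ and some $\langle\varphi,e_k\rangle<0$; hence the right endpoint is strictly positive and the left endpoint strictly negative. Writing $w_+=\max_i\langle\varphi,e_i\rangle/a_i$ and $w_-=-\min_i\langle\varphi,e_i\rangle/a_i$ gives $\varphi(U)=[-w_-,w_+]$ with $w_\pm>0$, and $w_-+w_+=w$ by the definition of $w$.

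The only real content is the lower bound $w_\pm\ge 1$. For $w_+$: assume $w_+<1$. Then $\langle\varphi,e_i\rangle<a_i$ for every $i$; since $B$ is effective we have $a_i\le 1$, so $\langle\varphi,e_i\rangle<1$, and as $\langle\varphi,e_i\rangle\in\Z$ this forces $\langle\varphi,e_i\rangle\le 0$ for all $i$. But then $\varphi\le 0$ on $\sum_i\R_{\ge 0}e_i=N_\R$, so $\varphi=0$, a contradiction; hence $w_+\ge 1$. Applying the same argument to the primitive functional $-\varphi$ yields $w_-\ge 1$. As a sanity check, note the lemma (ranging over all primitive $\varphi$) is equivalent to the assertion that $\Int\square_{-K-B}$ contains no nonzero lattice point, which is clear from $\square_{-K-B}\subseteq\square_{-K}$ (valid as $a_i\le 1$) together with $\Int\square_{-K}\cap M=\{m:\langle m,e_i\rangle\ge 0\ \forall i\}=\{0\}$ by completeness of $\Delta$.

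I do not expect a genuine obstacle here. The two points that require a moment's care are: (i) justifying $a_i>0$ for all $i$, which is exactly where the hypothesis $\mld(X,B)\ge\epsilon>0$ (and the formula for $\mld$ recorded in the Preliminary) is used, guaranteeing $U$ is bounded and the endpoints of $\varphi(U)$ are finite; and (ii) the interaction of $a_i\le 1$ (from $B\ge 0$) with the integrality of $\langle\varphi,e_i\rangle$, which is what upgrades the strict inequality $\langle\varphi,e_i\rangle<a_i$ to $\langle\varphi,e_i\rangle\le 0$. Everything else is formal.
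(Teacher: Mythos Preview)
Your proof is correct and follows essentially the same approach as the paper: both arguments hinge on the fact that the $e_i$ positively span $N_\R$, that $\langle\varphi,e_i\rangle\in\Z$, and that $a_i\le 1$ by effectivity of $B$. The paper argues directly (pick $e_j$ with $\varphi(e_j)>0$, then $w_+\ge \varphi(e_j)/a_j\ge 1$), whereas you phrase it as a contrapositive, but the content is identical.
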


\begin{proof} Since $U\subset N_\R$ contains $0$ in its interior, so does
	the interval $\varphi(U)\subset \R$. Then $\varphi(U)=[-w_-,w_+]$, where $w_-,w_+>0$ 
	and $w_-+w_+=w$.	

    Since $N_\R=|\Delta|=\cup_{t>0}tU$, there exist $e_i,e_j\in \Delta(1)$ such that 
    $\varphi(e_i)<0<\varphi(e_j)$. Since $e_j\in a_jU$, we obtain $0<\varphi(e_j)\le a_jw_+$.
    Since $\varphi(e_j)$ is a positive integer, $\varphi(e_j)\ge 1$. Since $B$ is effective,
    $a_j\le 1$. Therefore $1\le w_+$. A similar argument gives $1\le w_-$.
\end{proof}

Let $M_0=M/\Z\varphi$ be the quotient lattice, dual to $N_0=N\cap \varphi^\perp$.
Let $\square_0$ be the image of $\square_{-K-B}$ under the projection 
$M_\R\to M_{0,\R}$. The polar dual convex set $U_0=(\square_0)^*$ equals $U\cap \varphi^\perp$.
Since $\epsilon U$ contains the origin in the interior, we deduce
$$
N_0 \cap \Int(\epsilon U_0)=\{0\}.
$$

The surjective linear map $\varphi\colon N\to \Z$ corresponds to a fibration of tori
$T_N\to T_\Z$. Identifying $T_\Z$ with the standard open chart $\{z_0=1\}\subset \bP^1$,
we obtain a toric rational fibration 
$$
\phi \colon X\dashrightarrow  \bP^1, \ \phi (x)=[1:\chi^\varphi(x)].
$$ 
It coincides with the map induced by the invariant linear
system $\Lambda$ defined by the finite set $\{0,\varphi\}\subset M$. The support function
$h\colon N_\R\ni e\mapsto \min(0,\langle \varphi,e \rangle)\in \R$ is $N$-rational
and piecewise linear. If $X'\to X$ is a toric birational contraction, the mobile part 
of $\Lambda_{X'}$ equals $\sum_{e_i\in \Delta_{X'}(1)}-h(e_i)V(e_i)$.
Let $\Delta'$ be the fan consisting of the cones 
$\{ \sigma\cap \varphi^{\le 0},\sigma\cap \varphi^\perp,\sigma\cap \varphi^{\ge 0} | \sigma\in \Delta \}$.
It is a subdivision of $\Delta$, which defines a toric birational contraction 
$\mu\colon X'=T_N\emb(\Delta')\to T_N\emb(\Delta)=X$. The induced map 
$X'\to \bP^1$ is regular, and we obtained a toric resolution of $\phi$:
\[ 
\xymatrix{
	&   X' \ar[dl]_\mu  \ar[dr]^{\phi'} &       \\
	X  \ar@{..>}[rr]^\phi  &   &      \bP^1
}
\]

Let $\mu^*(K+B)=K_{X'}+B_{X'}$ be the log pullback. Then $B_{X'}$ is again invariant,
but possibly not effective in $\mu$-exceptional prime divisors.
 
\begin{lem}\label{bnda}
	$a_E(X,B)\le w$ for every invariant prime divisor $E$ on $X'$.
\end{lem}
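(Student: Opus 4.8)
The plan is to restate the lemma as a convex-geometry statement and then, for each ray of $\Delta'$, to replace the unknown ray generator by an explicit integral vector on the same ray whose log discrepancy can be bounded by hand. Recall that for a toric pair the log discrepancy of the valuation $E_e$ ($e\in N$ primitive) is $a_{E_e}(X,B)=g(e)$, where $g\colon N_\R\to\R$ is linear on each cone of $\Delta$ and satisfies $g(e_i)=a_i$; equivalently $g$ is minus the support function of $-K-B$, so, $-K-B$ being $\R$-free, $g$ is convex and positively homogeneous of degree one, hence subadditive: $g(x+y)\le g(x)+g(y)$. Since $B$ is effective, $g(e_i)=a_i\le 1$, and since $e_i/a_i\in U=\Conv(e_j/a_j\mid j)$, Lemma~\ref{1w} gives $\varphi(e_i)/a_i\in\varphi(U)=[-w_-,w_+]$, i.e.
$$
\varphi(e_i)\le a_iw_+\quad\text{and}\quad -\varphi(e_i)\le a_iw_-.
$$
An invariant prime divisor of $X'$ is $V(v)$ for $v$ the primitive generator of a ray of $\Delta'$, and what we must prove is $g(v)\le w$.

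I would first dispose of the easy rays. A ray of $\Delta'$ is $\tau\cap H$ for some $\tau\in\Delta$ and $H\in\{\varphi^{\le 0},\varphi^\perp,\varphi^{\ge 0}\}$; if its generator $v$ lies in $\Delta(1)$ then $g(v)=a_i\le 1\le w$ (using $w=w_-+w_+\ge 2$), so we are done. Otherwise a short combinatorial check shows that such a genuinely new ray must come from $H=\varphi^\perp$, so that $\varphi(v)=0$: a new ray contained in $\varphi^{\ge 0}$ would arise from a $2$-face of $\tau$ crossing $\varphi^\perp$, whose $\varphi>0$ ray would then also lie in $\tau\cap\varphi^{\ge 0}$, contradicting one-dimensionality; similarly for $\varphi^{\le 0}$. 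Now let $\sigma$ be the minimal cone of $\Delta$ containing $v$; then $v\in\relint\sigma$ and, since $\sigma\subseteq\tau$, one has $\sigma\cap\varphi^\perp=\R_{\ge 0}v$. As $\sigma\cap\varphi^\perp$ is one-dimensional while $\dim\sigma\ge 2$, the functional $\varphi$ cannot be $\ge 0$ on $\sigma$ (it would then vanish on all of $\sigma$, $v$ being interior), nor $\le 0$; hence $\sigma$ has a ray $e_a$ with $\varphi(e_a)>0$ and a ray $e_b$ with $\varphi(e_b)<0$.

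The key step is to set $p=\varphi(e_a)\in\Z_{>0}$, $q=-\varphi(e_b)\in\Z_{>0}$ and form $u=q\,e_a+p\,e_b\in N$. By construction $\varphi(u)=0$, and $u$ is a non-negative combination of rays of $\sigma$, so $u\in\sigma\cap\varphi^\perp=\R_{\ge 0}v$; moreover $u\ne 0$, because $e_a$ and $e_b$ are distinct rays of the strongly convex cone $\sigma$ and hence linearly independent. Writing $u=c\,v$, primitivity of $v$ forces $c\in\Z_{\ge 1}$, so (as $g(v)\ge 0$) we get $g(v)\le c\,g(v)=g(u)$. Now subadditivity and the two inequalities displayed above give
$$
g(v)\le g(u)\le q\,g(e_a)+p\,g(e_b)=q\,a_a+p\,a_b\le a_bw_-\cdot a_a+a_aw_+\cdot a_b=a_aa_b(w_-+w_+)\le w,
$$
since $a_a,a_b\le 1$. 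This is exactly the assertion $a_{V(v)}(X,B)\le w$.

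The main obstacles are the two combinatorial points about $\Delta'$: that the only rays requiring an argument are the ``horizontal'' ones with $\varphi(v)=0$, and that for the minimal cone $\sigma\ni v$ one genuinely has $\sigma\cap\varphi^\perp=\R_{\ge 0}v$ with $\sigma$ crossing $\varphi^\perp$. Once these are settled, the decisive idea is the choice of $u$: it trades the opaque vector $v$ for the transparent integral combination $q\,e_a+p\,e_b$ lying on the same ray, after which the convexity of $g$ and the width bound $\varphi(U)=[-w_-,w_+]$ make the estimate a one-line computation. I expect the fan bookkeeping, not the inequality, to be the only delicate part.
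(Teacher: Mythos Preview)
Your proof is correct and follows essentially the same approach as the paper's: both identify a new ray $v\in\varphi^\perp$ as a positive integer multiple of the combination $(-\varphi(e_b))\,e_a+\varphi(e_a)\,e_b$ of two rays of a cone of $\Delta$ with opposite $\varphi$-signs, and then bound the log discrepancy using $a_a,a_b\le 1$ together with the width estimates $\varphi(e_a)\le a_aw_+$ and $-\varphi(e_b)\le a_bw_-$. The paper is terser---it takes a top cone and simply asserts the two rays and the relation $z_{ij}e'=\cdots$, and it uses linearity of the discrepancy function on $\sigma$ rather than convexity/subadditivity---but the computation is identical.
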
 

\begin{proof} Suppose $e'\in \Delta'(1)\setminus \Delta(1)$. There exists $\sigma\in \Delta(top)$,
	and $e_i,e_j\in \sigma(1)$ such that $\varphi(e_j)<0<\varphi(e_i)$ and 
	$
	z_{ij}e'=-\langle \varphi,e_j\rangle e_i+ \langle \varphi,e_i \rangle e_j
	$
	for some $z_{ij}\in \Z_{\ge 1}$. There exists $\psi_\sigma\in M_\R$ such that 
	$a_{V(e)}(X,B)=\langle \psi_\sigma,e \rangle$ for every $e\in N^{prim}\cap \sigma$.
	Since $\langle \psi_\sigma,e_i\rangle=a_i$ and $\langle \psi_\sigma,e_j \rangle=a_j$, we obtain 
	$$
	a_{V(e')}(X,B)=\frac{a_ia_j}{z_{ij}}\langle \varphi,\frac{e_i}{a_i}-\frac{e_j}{a_j} \rangle.
	$$
	Since $\frac{e_i}{a_i},\frac{e_j}{a_j}\in U$ and $ \varphi(U)$ has length $w$,
	we obtain $\langle \varphi,\frac{e_i}{a_i}-\frac{e_j}{a_j} \rangle\le w$. Since $a_i,a_j\le 1$,
	we deduce $a_{E_{e'}}(X,B)\le w$.
	
	Suppose $e_i\in \Delta(1)$. Then $a_{V(e_i)}(X,B)=a_i\le 1$. Since $w\ge 2$ by Lemma~\ref{1w},
	we obtain $a_{V(e_i)}(X,B)=a_i<w$. 
\end{proof}

The toric morphism $\phi'\colon X'\to \bP^1$ is a contraction, so its restriction to the open torus
inside $\bP^1$ is a product. Its general fiber is the proper 
toric variety $F=T_{N_0}\emb(\Delta_0)$, where 
$\Delta_0=\{\sigma'\in \Delta' |\sigma'\subset \varphi^\perp\}=\{\sigma\cap \varphi^\perp | \sigma\in \Delta\}$.
Since $-K-B$ is $\R$-free, so is $-K_{X'}-B_{X'}$. By adjunction,
$(K_{X'}+B_{X'})|_F=K_F+B_F$, where $B_F$ is again invariant.
We obtain that $-K_F-B_F$ is $\R$-free, with moment polytope $\square_0$.
From $N_0 \cap \Int(\epsilon U_0)=\{0\}$, we obtain $\mld(F,B_F)\ge \epsilon$.
We conclude that $(F,B_F)$ satisfies the same properties as $(X,B)$, except that 
$B_F$ may not be effective.

Fix $0\le t\le \frac{1}{w}$. Denote $\tilde{B}=(1-t)\Sigma_X+tB$.
Then 
$$
K+\tilde{B}=(1-t)(K+\Sigma_X)+t(K+B)=t(K+B).
$$
Since $\mu^*(K+\Sigma_X)=K_{X'}+\Sigma_{X'}$, we obtain 
$\mu^*(K+\tilde{B})=K_{X'}+\tilde{B}_{X'}$, where 
$$
\tilde{B}_{X'}=(1-t)\Sigma_{X'}+t B_{X'}=\sum_{e'\in \Delta'(1)}(1-ta_{e'})V(e').
$$
By Lemma~\ref{bnda}, $\tilde{B}_{X'}$ is effective.
After adjunction, $\tilde{B}_F=(1-t)\Sigma_F+t B_F$ is again effective. We have 
$$
K_F+\tilde{B}_F=(1-t)(K_F+\Sigma_F)+t(K_F+B_F)=t(K_F+B_F).
$$
Then $\square_{-K_F-\tilde{B}_F}=t \square_0$, $U(F,\tilde{B}_F)=\frac{1}{t} U_0$.
So $-K_F-\tilde{B}_F$ is $\R$-free and $\mld(F,\tilde{B}_F)\ge t\epsilon$.

\begin{lem}\label{indb}
	$\lambda(X,B)<w+\lambda(F,\tilde{B}_F)$.
\end{lem}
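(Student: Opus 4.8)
The plan is to apply Proposition~\ref{lfb} with $\Lambda = N$, the map $\varphi\colon N\to\Z$ of the construction, $\Lambda_0 = N_0 = N\cap\varphi^\perp$, and $C = U$. By Lemma~\ref{1w} we have $\varphi(U) = [-w_-, w_+]$ with $w_-, w_+\ge 1$ and $w_- + w_+ = w = w(\varphi)$; so in the notation of Proposition~\ref{lfb}, $p_1 = -w_-$, $p_2 = w_+$. The natural point to pick on the fiber is $p = 0$, the center of symmetry, which we write as $p = \lambda_1 p_1 + \lambda_2 p_2$ with $\lambda_1 = w_+/w$, $\lambda_2 = w_-/w$; then $\gamma = \min(\lambda_1,\lambda_2) = \min(w_+, w_-)/w \ge 1/w$, since both $w_\pm\ge 1$. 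Now the slice $C\cap\varphi^{-1}(0) = U\cap\varphi^\perp = U_0$ already lies in $\varphi^\perp$ (no translation needed), so the hypothesis of Proposition~\ref{lfb} asks for a surjective $\varphi_0\colon N_0\to\Z$ such that $\varphi_0(U_0)$ has finite positive length $w_0$; and we want to arrange $w_0/\gamma \le \lambda(F,\tilde B_F)$ in the end.

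The key step is to choose $\varphi_0$ as a minimizer for $\lambda(F,\tilde B_F)$. Recall from the construction that $U(F,\tilde B_F) = \frac{1}{t}U_0$ for any fixed $t$ with $0 < t\le \frac{1}{w}$. By definition of $\lambda(F,\tilde B_F)$, there is a basis of $M_0$, say with first element $\varphi_0$, such that $\length\varphi_0(\frac{1}{t}U_0)\le \lambda(F,\tilde B_F)$, i.e. $\length\varphi_0(U_0) = t\cdot\length\varphi_0(\frac{1}{t}U_0)\le t\,\lambda(F,\tilde B_F)$. Take $t = \frac{1}{w}$. Then $w_0 := \length\varphi_0(U_0)\le \frac{1}{w}\lambda(F,\tilde B_F)$, and this is positive because $U_0$ is a convex body with $0$ in its relative interior inside $\varphi^\perp$ (as $0\in\Int U$) and $\varphi_0$ is surjective. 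Plugging into Proposition~\ref{lfb}: there exists a surjective $\varphi'\colon N\to\Z$ with $\varphi'|_{N_0} = \varphi_0$ and
$$
\length\varphi'(U) < w(\varphi) + \frac{w_0}{\gamma} \le w + w\cdot w_0 \le w + w\cdot\frac{1}{w}\lambda(F,\tilde B_F) = w + \lambda(F,\tilde B_F).
$$

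To finish, I must produce from $\varphi'$ a full basis of $M$ witnessing $\lambda(X,B)$. Since $\varphi'|_{N_0} = \varphi_0$ and $\varphi_0$ was the first element of a basis $\varphi_0,\psi_2,\dots,\psi_d$ of $M_0 = N_0^*$, I lift $\psi_2,\dots,\psi_d$ to elements of $M = N^*$; because the exact sequence $0\to N_0\to N\to\Z\to 0$ splits (it is a sequence of free abelian groups, $\varphi$ being surjective), $\varphi'$ together with these lifts forms a basis of $M$. The lengths $\length\psi_j(U)$ for $j\ge 2$ are controlled exactly as in the argument just run — indeed $\psi_j$ restricted to $N_0$ has $\length\psi_j(U_0)\le t\,\lambda(F,\tilde B_F)$, and one must check that a suitable lift keeps $\length\psi_j(U)$ below $w + \lambda(F,\tilde B_F)$ as well. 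This is the main obstacle: Proposition~\ref{lfb} as stated extends one functional at a time with a length bound, so I need either to apply it iteratively to each $\psi_j$ (re-using the same $\varphi$ and the same slice point $p=0$ each time, which is legitimate since the hypothesis on the slice does not depend on which $\varphi_0$ we extend), or to observe that the proposition's proof treats each coordinate independently. Running it for $\varphi_0$ and for each $\psi_j$ separately yields lifts $\varphi', \psi'_2,\dots,\psi'_d$, each with $\length(\cdot)(U) < w + \lambda(F,\tilde B_F)$; these $d$ functionals restrict on $N_0$ to the chosen basis of $M_0$, hence are themselves a basis of $M$. Taking the max over the $d$ lengths gives $\lambda(X,B) < w + \lambda(F,\tilde B_F)$, as claimed. $\qed$
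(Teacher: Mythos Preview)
Your approach is the same as the paper's, and the length estimate via Proposition~\ref{lfb} with $p=0$ and $\gamma\ge 1/w$ is correct. But there is a genuine gap in how you assemble the basis of $M$.

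The lattice $M_0=N_0^*$ has rank $d-1$, not $d$; so a basis of $M_0$ has $d-1$ elements, and your indexing $\varphi_0,\psi_2,\dots,\psi_d$ (which lists $d$ elements) is off by one. More importantly, the sentence ``these $d$ functionals restrict on $N_0$ to the chosen basis of $M_0$, hence are themselves a basis of $M$'' is false: if $\bar\varphi_1,\dots,\bar\varphi_{d-1}\in M$ are lifts of a basis of $M_0$, they span only a rank-$(d-1)$ sublattice of $M$ --- none of them detects the $\varphi$-direction. What is true (and what the paper does) is that $\varphi,\bar\varphi_1,\dots,\bar\varphi_{d-1}$ form a basis of $M$, because the restrictions of the $\bar\varphi_i$ to $N_0$ are a basis of $M_0$ and $\varphi$ generates the quotient $M/\,\text{(their span)}$. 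For this extra basis element one simply notes $\length\varphi(U)=w<w+\lambda(F,\tilde B_F)$. With that correction your argument goes through.

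A minor point: the lemma is stated for the fixed parameter $t\in(0,\tfrac1w]$ chosen in the construction, not just for $t=\tfrac1w$. Your computation works verbatim for general $t$ once you replace the equality $wt=1$ by the inequality $wt\le 1$ in the last chain: $w_0/\gamma\le w\cdot w_0\le w\cdot t\,\lambda(F,\tilde B_F)\le \lambda(F,\tilde B_F)$.
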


\begin{proof} Consider $U\subset N_\R$ and $\varphi\colon N_\R\to \R$.
	Then $\varphi(U)=[-w_-,w_+]$ for some $w_-,w_+>0$ with $w_-+w_+=w$. Let $\gamma$
	be the minimum of the barycentric coordinates of $0\in \varphi(U)$. By Lemma~\ref{1w},
	we estimate 
	$$
	\gamma=\frac{\min(w_-,w_+)}{w}\ge \frac{1}{w}.
	$$
	Consider now $U_0\subset N_0\otimes \R$. Let $\lambda_0=\lambda(N_0,U_0)$.
	That is, there exists a basis $\varphi_1,\ldots,\varphi_{d-1}$ of $M_0=N_0^*$ such
	that $\length \varphi_i(U_0)\le \lambda_0$. By Proposition~\ref{lfb}, each 
	$\varphi_i\colon N_0\to \Z$ admits a lifting $\bar{\varphi}_i\colon N\to \Z$
	such that 
	$$
	\length \bar{\varphi}_i(U)<w+\gamma^{-1}\length \varphi_i(U_0) \le w(1+\lambda_0).
	$$ 
	Since $\varphi,\bar{\varphi}_1,\ldots,\bar{\varphi}_{d-1}$ is a basis of $M$, we obtain 
	$$
	\lambda(N,U)<w(1+\lambda(N_0,U_0)).
	$$
	We have $\lambda(X,B)=\lambda(N,U)$ and $\lambda(N_0,U_0)=t\lambda(F,\tilde{B}_F)$.
	Therefore
	$$
	\lambda(X,B)<w+wt\lambda(F,\tilde{B}_F)\le w+\lambda(F,\tilde{B}_F).
	$$
\end{proof}


\section{Klt complements}


By Lemma~\ref{lp}, suffices to bound $\lambda(X,B)$ from above in Theorem~\ref{kF}.
With the same notations, we show

\begin{prop}\label{mi}
	$\lambda(X,B)\le  \lambda(d,\epsilon)$.
\end{prop}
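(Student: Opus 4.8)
The plan is to induct on $d = \dim X$, with Lemma~\ref{indb} providing the inductive descent from $X$ to a general fiber $F$ of the rational fibration $\phi$. The base case $d=1$ is immediate: $X = \bP^1$, $U = \square^*$ is an interval, and by Lemma~\ref{1w} applied to any primitive $\varphi \in M = \Z$, the length $w$ of $\varphi(U)$ satisfies $w \ge 2$; on the other hand $\mld(X,B) \ge \epsilon$ forces $N \cap \Int(\epsilon U) = \{0\}$, i.e. $\epsilon w_\pm \le 1$, so $w = w_- + w_+ \le \tfrac 2\epsilon = \lambda(1,\epsilon)$. Since $\lambda(X,B)$ is by definition the smallest $t$ for which $M$ admits a basis with all coordinate-projection lengths $\le t$, and here the only basis (up to sign) is the standard one, we get $\lambda(X,B) = w \le \tfrac 2\epsilon$.

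For the inductive step, suppose the proposition holds in dimension $d-1$. Given $(X,B)$ with $\mld(X,B) \ge \epsilon$, I would choose the primitive $\varphi \in M$ realizing the shortest possible $\varphi$-width of $U$ — call its width $w$ — so that in particular $w \le \lambda(X,B)$; but more usefully I need an \emph{a priori} upper bound on $w$ before $\lambda(X,B)$ is known. The point is that $\mld(X,B)\ge\epsilon$ gives $N\cap\Int(\epsilon U)=\{0\}$, and a $0$-symmetric convex body whose dilate by $\epsilon$ contains no nonzero lattice point cannot be too wide in \emph{some} rational direction: choosing $\varphi$ appropriately (e.g. a shortest vector argument, or just the successive-minima bound) one gets $w \le \tfrac{d(d+1)}{\epsilon}$. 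This is where the $\tfrac{d(d+1)}{\epsilon}$ summand in the recursion enters. Then set $t = \tfrac 1w$, form $\tilde B = (1-t)\Sigma_X + tB$ as in Section~3, pass to a general fiber $F$ of $\phi'$, and note $\dim F = d-1$, $\mld(F,\tilde B_F) \ge t\epsilon = \tfrac\epsilon w \ge \tfrac{\epsilon^2}{d(d+1)}$.

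By the inductive hypothesis applied to $(F,\tilde B_F)$ — which is a toric weak log Fano of dimension $d-1$ with $\tilde B_F$ effective and $\mld \ge \tfrac{\epsilon^2}{d(d+1)}$ — we get
$$
\lambda(F,\tilde B_F) \le \lambda\!\left(d-1,\tfrac{\epsilon^2}{d(d+1)}\right).
$$
Feeding this into Lemma~\ref{indb} yields
$$
\lambda(X,B) < w + \lambda(F,\tilde B_F) \le \tfrac{d(d+1)}{\epsilon} + \lambda\!\left(d-1,\tfrac{\epsilon^2}{d(d+1)}\right) = \lambda(d,\epsilon),
$$
which is exactly the claimed bound (the strict inequality is harmless since $\lambda(d,\epsilon)$ is an integer-coefficient polynomial in $\epsilon^{-1}$ and we only need $\le$).

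**Main obstacle.** The delicate point is the \emph{a priori} bound $w \le \tfrac{d(d+1)}{\epsilon}$ on the width in a well-chosen direction $\varphi$, using only $N \cap \Int(\epsilon U) = \{0\}$ and $\dim X = d$. One must be careful that this is a genuine bound independent of $\lambda(X,B)$ itself (otherwise the recursion is circular), and that the chosen $\varphi$ is primitive so that Lemma~\ref{1w} and the construction of Section~3 apply. I expect this to follow from a Minkowski-type / successive-minima estimate: if $\epsilon U$ has no nonzero lattice point in its interior then its $d$-th successive minimum is $\ge 1$, and combined with $0\in\Int U$ this caps the width in the direction of (a functional dual to) a short lattice vector by a factor like $d(d+1)$ times $\epsilon^{-1}$. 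Verifying the exact constant $d(d+1)$ — as opposed to some other polynomial in $d$ — is the real content and the step I would spend the most care on; everything else is the bookkeeping of Lemmas~\ref{lp}, \ref{1w}, \ref{bnda}, and \ref{indb} already established above.
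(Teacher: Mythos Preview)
Your proposal is correct and follows the same inductive structure as the paper's proof. The width bound $w \le \tfrac{d(d+1)}{\epsilon}$ that you flag as the main obstacle is exactly where the paper invokes an external result, citing \cite[Theorem~5.2.b)]{AI20} for the lattice-width inequality $w(\epsilon U)\le d(d+1)$ whenever $N\cap\Int(\epsilon U)=\{0\}$ and $\dim U=d$; everything else in your outline matches the paper's argument.
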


\begin{proof} Suppose $d=1$.
	Here $X=\bP^1$ and $B=(1-a_+)V(1)+(1-a_-)V(-1)$ with $a_+,a_-\in [0,1]$.
	The assumption $\mld(X,B)\ge \epsilon$ is equivalent to $\min(a_+,a_-)\ge \epsilon$.	
	We have $N=\Z$ and $U=[-\frac{1}{a_-},\frac{1}{a_+}]$. Therefore 
	$$
	\lambda(X,B)=\frac{1}{a_+}+\frac{1}{a_-} \le \frac{2}{\epsilon}=\lambda(1,\epsilon).
	$$
	
	Suppose $d\ge 2$. We have $N\cap \Int(\epsilon U)=\{0\}$ and $\dim U=d$.
	Therefore $\dim (N\cap \Int(\epsilon U))=0<\dim U$. By~\cite[Theorem 5.2.b)]{AI20},
	$w(\epsilon U)\le d(d+1)$. That is, there exists a surjective linear homomorphism
	$\varphi\colon N\to \Z$ such that the interval $\varphi(U)$ has length 
	$$
	w\le \frac{d(d+1)}{\epsilon}.
	$$
	We apply the construction in the previous section to $(X,B)$ and $\varphi$,
	choosing $t=w^{-1}$. Then $\mld(F,\tilde{B}_F)\ge \frac{\epsilon}{w}$.
	By induction, $\lambda(F,\tilde{B}_F)\le \lambda(d-1, \frac{\epsilon}{w})$.
	By Lemma~\ref{indb}, 
	$$
	\lambda(X,B) \le w+\lambda(d-1,\frac{\epsilon}{w}).
	$$
	Note that $\lambda(n,\epsilon)$ is a polynomial in $\epsilon^{-1}$ with positive coefficients.
	Therefore $\lambda(n,\epsilon)$ is decreasing in $\epsilon$. Then
	$$
	w+\lambda(d-1,\frac{\epsilon}{w})\le \frac{d(d+1)}{\epsilon}+\lambda(d-1,\frac{\epsilon^2}{d(d+1)})=
	\lambda(d,\epsilon).
     $$
     Therefore $\lambda(X,B)\le \lambda(d,\epsilon)$.
\end{proof}

\begin{exmp}
	 $\lambda(2,\epsilon)=6\epsilon^{-1}+12\epsilon^{-2}$ and 
	$\lambda(3,\epsilon)=12\epsilon^{-1}+72\epsilon^{-2}+1728 \epsilon^{-4}$.
\end{exmp}


\end{document}